\newcommand{\bdm}{\begin{displaymath}}
\newcommand{\edm}{\end{displaymath}}
\theoremstyle{definition}
\newtheorem{lem}{Lemma}
\newtheorem{thm}{Theorem}
\newtheorem{defn}{Definition}
\newtheorem{prop}{Proposition}
\newtheorem{rem}{Remark}
\newtheorem{eg}{Example}
\title[]{Coloring Factors of Substitutive Infinite Words}
\author{Andr\'{e} Bernardino}
\address{ Departamento de Matem\'{a}tica, Universidade da Beira Interior\\
 6200-001 Covilh\~{a}, Portugal}
\email{and\_bernardino@hotmail.com}
\author{Rui Pacheco}
\address{ Departamento de Matem\'{a}tica, Universidade da Beira Interior\\
 6200-001 Covilh\~{a}, Portugal}
\email{rpacheco@ubi.pt}
\author{Manuel Silva}
\address{ Departamento de Matem\'{a}tica, Universidade Nova de Lisboa\\ 2829-516 Caparica, Portugal}
\email{mnas@fct.unl.pt}
\begin{document}

\begin{abstract}
In this paper, we consider  infinite words that arise as fixed points of primitive substitutions on a finite alphabet and finite colorings of their factors. Any such infinite word exhibits a ``hierarchal structure" that  will allow us to define, under the additional  condition of \emph{strong recognizability},
 certain remarkable finite colorings of its factors. In particular we generalize two combinatorial results by Justin and Pirillo concerning arbitrarily large monochromatic $k$-powers occurring in infinite words; in view of a recent paper by de Luca, Pribavkina and Zamboni, we will give new examples of classes of infinite words $\mathbf{u}$ and finite colorings that do not allow infinite monochromatic factorizations $\mathbf{u}=\mathbf{u}_1\mathbf{u}_2\mathbf{u}_3\ldots$.
\end{abstract}
\maketitle
\section{Introduction}
 Let $\mathcal{A}$ be a finite alphabet and $\mathcal{A}^*$ be the set of all finite words over $\mathcal{A}$.
In this paper we consider  infinite words that arise as fixed points of primitive substitutions on $\mathcal{A}$ and finite colorings of $\mathcal{A}^*$. Any such infinite word exhibits a ``hierarchal structure",  induced by the underlying substitution, that  will allow us to define, under the additional  condition of \emph{strong recognizability}  (in the spirit of the recognizability conditions
introduced by B. Host \cite{Host} and B. Moss\'{e \cite{Mo}}),
 certain remarkable finite colorings of its factors.

For a two letter alphabet $\{0,1\}$,  J. Justin and G. Pirillo \cite{JP} constructed a finite coloring $c$ of $\{0,1\}^*$
with respect to which the Thue-Morse word $\mathbf{u}^T$ avoids  \emph{uniform monochromatic $3$-powers}, that is  any word  of the form $\mathbf{w}_1\mathbf{w}_2\mathbf{w}_3$ occurring in $\mathbf{u}^T$, where the $\mathbf{w}_i$ are of equal length, satisfies $c(\mathbf{w}_i)\neq c(\mathbf{w}_j)$ for some $i$ and $j$.
The  Thue-Morse word is a fixed point of the substitution $\zeta_T$ defined by $\zeta_T(0)=01$ and $\zeta_T(1)=10$.
In order to define the coloring $c$, the authors made implicit use of the recognizability properties of this substitution.
In Section \ref{JP1} we will give a generalization of their result: for a large class of substitutions of constant length on a finite alphabet $\mathcal{A}$, there always exist finite colorings of $\mathcal{A}^*$ with respect to which any fixed word avoids arbitrarily large uniform monochromatic $k$-powers.

Once this established, it is natural to consider next arbitrarily large  monochromatic $k$-powers $\mathbf{w}_1\mathbf{w}_2\ldots\mathbf{w}_k$ with \emph{uniformly bounded gaps}, that is  the lengths of the factors $\mathbf{w}_i$ are bounded by some $p>0$ and $p$ is independent of $k$.
J. Justin and G. Pirillo \cite{JP} showed that there exists a 3-coloring of $\{0,1\}^*$ with respect to which the
 fixed point $\mathbf{u}^J$ of the substitution $\zeta_J$ defined by $\zeta_J(0)=00001$ and $\zeta_J(1)=11110$ avoids arbitrarily large  monochromatic $k$-powers with uniformly bounded gaps.
The main ingredients in their argument are the following. Firstly,
 $\mathbf{u}^J$ avoids \emph{abelian $5$-powers} \cite{Ju}, that is, there does not exist any finite word of the form $\mathbf{w}=\mathbf{w}_1\ldots \mathbf{w}_5$ occurring in $\mathbf{u}^J$ where any two of  the factors $\mathbf{w}_i$ are permutations of each other. Secondly, any very long factor of  $\mathbf{u}^J$ contains ``about equally many $0$'s and $1$'s".
 In Section \ref{JP2} we will be able to show that, more generally, if an infinite word $\mathbf{u}$ over a finite alphabet satisfies
 \begin{enumerate}
   \item $\mathbf{u}$ avoids arbitrarily large abelian $k$-powers;
   \item  each factor of $\mathbf{u}$ occurs in $\mathbf{u}$ with a \emph{uniform frequency};
 \end{enumerate}
 then  $\mathbf{u}$ avoids arbitrarily large  monochromatic $k$-powers with uniformly bounded gaps. Again, a large class of substitutions gives rise to infinite words satisfying the above properties.
 The property of uniform frequencies is close related to the property of unique ergodicity of the dynamical system associated to the action of the shift map on $\mathbf{u}$. It is well known that such dynamical system is uniquely ergodic if $\mathbf{u}$ is the fixed word of a primitive substitution \cite{Q}.

In \cite{LZ,LPZ} the authors discussed the following question: \emph{given an infinite word $\mathbf{u}$ over  a finite alphabet, does there exist a finite coloring of its finite factors which avoids monochromatic factorizations of $\mathbf{u}$}? They showed that this question, which is ultimately motivated by a result of Schutzenberger \cite{S}, has a positive answer for all non-\emph{uniformly recurrent} words and for various classes of uniformly recurrent words.
 V. Salo and I. T\"{o}rm\"{a} \cite{ST} showed  that any aperiodic linearly recurrent word $\mathbf{u}$ admits a coloring of its factors that avoids monochromatic factorizations of $\mathbf{u}$ into factors of increasing lengths.
We will prove, in Section \ref{LZ}, that the question has also a positive answer  for a wide class of  infinite words arising as fixed points of primitive substitutions and we shall be able to construct examples that do not fit in those classes of infinite words considered in \cite{LZ,LPZ}. Again, the ``hierarchal structure"  of these words and the strong recognizability condition will play a fundamental role in the construction of our colorings.

\vspace{.10in}

\textbf{Acknowledgements.} This work was  supported by the Center of Mathematics and
Applications of Universidade da Beira Interior   through the project
UID/MAT/00212/2013  and the Center of Mathematics and Applications of Universidade  Nova de Lisboa through the   project        UID/MAT/00297/2013.

\section{Preliminaries}
\subsection{Monochromatic factorizations}  The binary operation obtained by concatenation of two finite words endows $\mathcal{A}^*$ with a monoid structure (the identity is the empty word $\emptyset$). For each $\alpha\in \mathcal{A}$, the $\alpha$-\emph{length} of a finite word $\mathbf{w}\in\mathcal{A}^*$, which we denote by $|\mathbf{w}|_\alpha$, is the number of occurrences of $\alpha$ in $\mathbf{w}$. The \emph{length} of $\mathbf{w}$ is the sum $|\mathbf{w}|=\sum_{\alpha\in\mathcal{A}}|\mathbf{w}|_\alpha$ of all its $\alpha$-lengths. If a copy of $\mathbf{w}$ occurs in a word $\mathbf{u}$, we say that $\mathbf{w}$ is a \emph{factor} of $\mathbf{u}$.

 Given an infinite word $\mathbf{u}=\alpha_1\alpha_2\alpha_3\ldots$ over $\mathcal{A}$,  $\mathcal{L}(\textbf{u})$ stands for the \emph{language} of $\mathbf{u}$, that is the set of all nonempty finite factors of $\mathbf{u}$. For each $i$ and $j$ satisfying $1\leq i\leq j$, we write $$\mathbf{u}_{[i,j]}=\alpha_i\alpha_{i+1}\ldots \alpha_j\in \mathcal{L}(\textbf{u}).$$ Let $c:\mathcal{L}(\textbf{u})\to \{1,\ldots,r\}$ be a finite coloring of $\mathcal{L}(\textbf{u})$.
\begin{defn}\cite{JP}
   A finite word $\mathbf{w}\in \mathcal{L}(\mathbf{u})$ is a \emph{monochromatic $k$-power} if there exists a factorization $\mathbf{w}=\mathbf{w}_1\mathbf{w}_2\ldots \mathbf{w}_k$, with $\mathbf{w}_i\in \mathcal{L}(\mathbf{u})$, such that $c(\mathbf{w}_i)=c(\mathbf{w}_j)$, for all $i$ and $j$. A monochromatic $k$-power is \emph{uniform} if $|\mathbf{w}_i|=|\mathbf{w}_j|$ for all $i$ and $j$.
 A monochromatic $k$-power  has  \emph{gaps bounded} by $p$ if $|\mathbf{w}_i|< p$ for all $i$.
  \end{defn}



\subsection{Substitutive words} Next we recall some fundamental facts concerning substitutive words. For further details we refer the reader to \cite{Q}.
A \emph{substitution} $\zeta$ of $\mathcal{A}$ is a map from $\mathcal{A}$ to $\mathcal{A}^*$ which associates the letter $\alpha$ to some word $\zeta(\alpha)\in\mathcal{A}^*$. This induces a morphism of the monoid $\mathcal{A}^*$  by putting $\zeta(\emptyset)=\emptyset$ and $$\zeta(\alpha_1\alpha_2\ldots\alpha_k)=\zeta(\alpha_1)\zeta(\alpha_2)\ldots \zeta(\alpha_k).$$
For each $\alpha\in\mathcal{A}$, let $l_\alpha=|\zeta(\alpha)|$ so that
  $$\zeta(\alpha)=\zeta(\alpha)_1\zeta(\alpha)_2\ldots \zeta(\alpha)_{l_{\alpha}}.$$
The substitution has \emph{constant length} $l$ if $l=l_\alpha$ for all $\alpha\in\mathcal{A}$.
The substitution $\zeta$ is said to be \emph{primitive} if there exists $n>0$ such that, for every $\alpha,\beta\in \mathcal{A}$, $\beta$ occurs in $\zeta^n(\alpha)$ (that is, $n$ can be chosen independent of $\alpha$ and $\beta$). Henceforth we will assume that  $\alpha=\zeta(\alpha)_1$ for some $\alpha\in\mathcal{A}$.

Consider the space of \emph{$\zeta$-substitutive infinite words}
$X_\zeta=\{\mathbf{u}\in \mathcal{A}^\mathbb{N}\colon\,\, \mathcal{L}(\mathbf{u})\subseteq \mathcal{L}_\zeta\}$, where
$$\mathcal{L}_\zeta=\!\!\!\!\bigcup_{n\geq 0,\alpha\in\mathcal{A}}\!\!\{\mbox{factors of $\zeta^n(\alpha)$}\},$$
and endow $\mathcal{A}^\mathbb{N}$ with the metric $d$ defined by: given  $\mathbf{u}=\alpha_1\alpha_2\ldots$ and $\mathbf{v}=\beta_1\beta_2\ldots$ in $\mathcal{A}^\mathbb{N}$,  $d(\mathbf{u},\mathbf{v})=0$ if $\mathbf{u}=\mathbf{v}$ and $d(\mathbf{u},\mathbf{v})=1/N$ if $N$ is the smallest positive integer for which $\alpha_N\neq \beta_N$. Denote by $\sigma$ the \emph{shift map} in $\mathcal{A}^\mathbb{N}$, $$\sigma(\alpha_1\alpha_2\alpha_3\ldots)=\alpha_2\alpha_3\ldots,$$ which preserves $X_{\zeta}$.
The substitution $\zeta$ also induces a map $\zeta:X_\zeta\to X_\zeta$.
\begin{prop}\cite{Q}\label{primitive}
Suppose that $\zeta$ is a primitive substitution. We have:
 \begin{enumerate}
  \item $\zeta$ admits a fixed point: $\zeta(\mathbf{u})=\mathbf{u}$, for some $\mathbf{u}\in X_\zeta$.
  \item The fixed word $\mathbf{u}$ is \emph{uniformly recurrent}, that is, for each factor $\mathbf{w}$ of $\mathbf{u}$ there exists $R>0$ such that in any other factor of $\mathbf{u}$ of length $R$ there is at least one occurrence of $\mathbf{w}$.

   \item The system $(X_\zeta,\sigma)$ is \emph{minimal}, that is, $X_{\zeta}$ is the closure of any orbit under $\sigma$: $X_\zeta=\overline{\mathcal{O}(\mathbf{v})}$ for all $\mathbf{v}\in X_\zeta$. In particular $\mathcal{L}_\zeta=\mathcal{L}(\mathbf{v})$ for all $\mathbf{v}\in X_\zeta$.
       \item Each factor of $\mathbf{u}$ occurs in $\mathbf{u}$ with a \emph{uniform frequency}. More precisely, denoting, for each finite interval $I$,  the number of occurrences of the factor $\mathbf{w}$ in  $\mathbf{u}_I$ by $L_{\mathbf{w}}(\mathbf{u}_I)$, then
$$\lim_{N\to  \infty}\frac{L_{\mathbf{w}}(\mathbf{u}_{[k,k+N]})}{N}$$
converges uniformly in $k$ to some number $d_\mathbf{w}\geq 0$.
\end{enumerate}

\end{prop}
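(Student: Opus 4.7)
The plan is to prove the four items in turn, leveraging primitivity progressively. For (1), I would use the standing assumption that $\alpha=\zeta(\alpha)_1$ for some $\alpha\in\mathcal{A}$: the words $\zeta^n(\alpha)$ form a chain of prefixes, so their limit in $\mathcal{A}^\mathbb{N}$ is a well-defined fixed point of $\zeta$, provided the lengths $|\zeta^n(\alpha)|$ tend to infinity. This divergence is an easy consequence of primitivity combined with $|\mathcal{A}|\geq 2$, since otherwise $\zeta$ would permute letters and no power could make a single letter cover all of $\mathcal{A}$. Continuity of $\zeta$ on $\mathcal{A}^\mathbb{N}$ then yields $\zeta(\mathbf{u})=\mathbf{u}$.

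For (2), given a factor $\mathbf{w}$ of $\mathbf{u}$, I would pick $m$ and $\alpha\in\mathcal{A}$ with $\mathbf{w}$ a factor of $\zeta^m(\alpha)$. Primitivity yields $n_0$ such that $\alpha$ occurs in $\zeta^{n_0}(\beta)$ for every $\beta\in\mathcal{A}$, so $\zeta^m(\alpha)$ occurs in $\zeta^{n_0+m}(\beta)$ for every $\beta$. Setting $R=\max_\beta|\zeta^{n_0+m}(\beta)|+|\mathbf{w}|$, any factor of $\mathbf{u}$ of length at least $R$ must contain an entire block of the form $\zeta^{n_0+m}(\beta)$ arising from some letter $\beta$ of $\mathbf{u}$, and therefore contains $\mathbf{w}$.

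For (3), the key intermediate claim is $\mathcal{L}(\mathbf{v})=\mathcal{L}_\zeta$ for every $\mathbf{v}\in X_\zeta$. One inclusion is built into the definition; the other follows from (2), since any $\mathbf{w}\in\mathcal{L}_\zeta=\mathcal{L}(\mathbf{u})$ appears in every sufficiently long factor of $\mathbf{u}$, and hence in every sufficiently long factor of $\mathbf{v}$ because $\mathcal{L}(\mathbf{v})\subseteq\mathcal{L}(\mathbf{u})$. Minimality is then immediate: any $\mathbf{u}'\in X_\zeta$ has every prefix appearing somewhere in $\mathbf{v}$, so $\mathbf{u}'\in\overline{\mathcal{O}(\mathbf{v})}$, while the reverse inclusion $\overline{\mathcal{O}(\mathbf{v})}\subseteq X_\zeta$ is trivial as $X_\zeta$ is closed and $\sigma$-invariant.

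For (4), the main obstacle, I would associate the incidence matrix $M$ with $M_{\alpha\beta}=|\zeta(\beta)|_\alpha$, which is primitive in the Perron--Frobenius sense, yielding a dominant positive eigenvalue $\lambda$ and positive left and right eigenvectors that govern letter frequencies. The same machinery extends to an arbitrary factor $\mathbf{w}$ by passing to the induced substitution on the set of factors of a fixed length. The delicate issue is uniformity in $k$: using the hierarchical structure, one chooses $n$ maximal with $|\zeta^n(\beta)|\leq N^{1/2}$ for all $\beta$, writes $\mathbf{u}_{[k,k+N]}$ as a concatenation of blocks $\zeta^n(\beta_i)$ modulo boundary terms of length $O(N^{1/2})$, and expresses $L_\mathbf{w}(\mathbf{u}_{[k,k+N]})$ as a sum of interior counts inside each block plus a cross-boundary error. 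The uniform convergence then reduces to quantitative Perron--Frobenius estimates together with Moss\'{e}-type recognizability to ensure the hierarchical decomposition is essentially unique, which is the technically demanding step.
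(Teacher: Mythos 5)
The paper offers no proof of this proposition; it is imported wholesale from Queffelec \cite{Q}, and for items (1)--(3) your sketch is exactly the standard argument found there: the prefix chain $\zeta^n(\alpha)$, the block decomposition $\mathbf{u}=\zeta^{n_0+m}(\mathbf{u})$, and the language identity $\mathcal{L}(\mathbf{v})=\mathcal{L}_\zeta$ deduced from uniform recurrence. One quantitative slip in (2): with $R=\max_\beta|\zeta^{n_0+m}(\beta)|+|\mathbf{w}|$ it is \emph{not} true that every window of length $R$ contains an entire block $\zeta^{n_0+m}(\beta)$; a window of that length can consist of a proper suffix of one block followed by a proper prefix of the next, and if the occurrence of $\mathbf{w}$ sits near the start of the first block and near the end of the second, both are cut. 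Take $R=2\max_\beta|\zeta^{n_0+m}(\beta)|$ instead, so that a full block is forced inside the window, and the argument closes.

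The more substantive issue is in (4), where you declare Moss\'{e}-type recognizability to be ``the technically demanding step.'' It is not needed, and building the proof on it would be a mistake: the proposition is stated for arbitrary primitive substitutions with no recognizability hypothesis (indeed, strong recognizability is introduced in this paper as an \emph{additional} assumption precisely because it does not come for free), and Moss\'{e}'s theorem is both deeper than the statement being proved and restricted to the aperiodic case, whereas a primitive substitution may well have a periodic fixed point. The point you are missing is that the hierarchical decomposition used in the proof need not be \emph{recovered} from the word --- it is the canonical one handed to you by $\mathbf{u}=\zeta^n(\mathbf{u})$, i.e.\ by the cutting bars $E_n$, which exist and are unambiguous by construction. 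The standard argument (Michel's theorem, as presented in \cite{Q}) bounds $L_{\mathbf{w}}(\mathbf{u}_{[k,k+N]})$ above and below by counting the blocks $\zeta^n(\beta)$ of this canonical decomposition that meet $[k,k+N]$, and deals with occurrences of $\mathbf{w}$ straddling block boundaries by passing to the induced substitution on the alphabet of factors of length $|\mathbf{w}|$, to whose incidence matrix Perron--Frobenius estimates are applied. With recognizability excised and replaced by the canonical decomposition, your outline for (4) becomes the standard proof.
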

In particular, if $\zeta$ is primitive, for each letter $\alpha\in\mathcal{A}$, the limit
$$\lim_{|\mathbf{w}|\to \infty}\frac{|\mathbf{w}|_\alpha}{|\mathbf{w}|}$$
exists and does not depend on the factors $\mathbf{w}\in \mathcal{L}(\mathbf{u})$.

\subsection{Recognizability}

 Let $\zeta:\mathcal{A}\to\mathcal{A}^*$ be a substitution and $\mathbf{u}$ a fixed point. For each $k\geq 1$, let $$E_k=\{1\}\cup\{|\zeta^k(\mathbf{u}_{[1,p]})|+1\colon\,p\geq 1\}$$
be  the set of \emph{$k$-cutting bars} of $\zeta$. Of course, the $k$-cutting bars of $\zeta$ are precisely the  $1$-cutting bars of $\zeta^k$.
Given an integer interval $I=[i^-,i^+]$, we denote by $m(I)$ the least $1$-cutting bar of $\zeta$ greater than or equal to $i^-$ and by $M(I)$ the largest $1$-cutting bar  of $\zeta$ less than or equal to $i^+$.
Let $p_I$ and $q_I$ be defined by
$$m(I)=|\zeta(\mathbf{u}_{[1,p_I]})|+1,\quad M(I)=|\zeta(\mathbf{u}_{[1,q_I]})|+1.$$
 We say that the interval $I$ is  \emph{$k$-fitted} if $i^-$ and $i^++1$ are $k$-cutting bars. Of course, if $I$ is $k$-fitted, then: $I$ is also $k'$-fitted for all $k'<k$;
$i^-=m(I)$ and $i^+=M(I)-1$;
$\mathbf{u}_I=\zeta(\mathbf{u}_{[p_I+1,q_I]}).$
For a general integer interval $I$, we also define
$$\mathbf{u}_{I,\zeta}^{\mathrm{pref}}=\alpha_{i^-}\ldots \alpha_{m(I)-1},\quad \mathbf{u}_{I,\zeta}^{\mathrm{suf}}=\alpha_{M(I)}\ldots \alpha_{i^+}.$$
When it is clear in the context which substitution we are referring to, then we simply denote $\mathbf{u}_{I,\zeta}^{\mathrm{pref}}=\mathbf{u}_I^{\mathrm{pref}}$ and   $\mathbf{u}_{I,\zeta}^{\mathrm{suf}}= \mathbf{u}_I^{\mathrm{suf}}$.


\begin{defn}\label{strong}
The substitution $\zeta$ is \emph{strongly recognizable} if there exists an integer $K$ such that: if $I$ is a $1$-fitted interval, $|I|>K$ and  $\mathbf{u}_{I}=\mathbf{u}_{J}$, then the integer interval $J$ is also $1$-fitted and $$\mathbf{u}_{[p_I+1,q_I]}=\mathbf{u}_{[p_J+1,q_J]}.$$
 The smallest integer $K$ enjoying this property is called the \emph{(strong) recognizability index} of $\zeta$.
Given a strongly recognizable substitution $\zeta$, if $\mathbf{w}\in\mathcal{L}(\mathbf{u})$, $|\mathbf{w}|>K$ and $\mathbf{w}=\mathbf{u}_I$ for some $1$-fitted interval $I$, then  we say that $\mathbf{w}$ is $1$-fitted and we denote $\zeta^{-1}(\mathbf{w})=\mathbf{u}_{[p_I+1,q_I]}$ (it is clear that this definition does not depend on the interval $I$).
\end{defn}

\begin{eg}
The Thue-Morse substitution  $\zeta_T$ on $\mathcal{A}=\{0,1\}$ defined by $\zeta_T(0)=01$ and $\zeta_T(1)=10$ is strongly recognizable. As a matter of fact,
if $|I|=4$, $I$ is $1$-fitted if, and only if, $\mathbf{u}_I$ coincides with one of the following words: $1001$, $0110$, $1010$, $0101$. On the other hand, $\zeta_T$ has constant length and is one-to-one on letters; consequently, if $I$ is $1$ fitted, $|I|\geq 4 $ and $\mathbf{u}_I=\mathbf{u}_J$, then $J$ is also $1$-fitted and $\mathbf{w}=\mathbf{u}_I=\mathbf{u}_J$ can be unequivocally ``desubstituted", that is, $\mathbf{u}_{[p_I+1,q_I]}=\mathbf{u}_{[p_J+1,q_J]}.$
\end{eg}

\begin{lem}\label{presuf}
Let $\zeta:\mathcal{A}\to\mathcal{A}^*$ be a strongly recognizable substitution with recognizability index $K$. Let $L=\max\{|\zeta(\alpha)|\colon\,\alpha\in \mathcal{A}\}$.
  Suppose that $\mathbf{u}_{I}=\mathbf{u}_{J}$, with $|I|=|J|>2L+K$. Then
  \begin{enumerate}
    \item  $\mathbf{u}_I^{\mathrm{pref}}=\mathbf{u}_J^{\mathrm{pref}}$
    \item $\mathbf{u}_I^{\mathrm{suf}}=\mathbf{u}_J^{\mathrm{suf}}$
    \item $\mathbf{u}_{[m(I),M(I)-1]}=\mathbf{u}_{[m(J),M(J)-1]}$
  \end{enumerate}
\end{lem}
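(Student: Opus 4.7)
The plan is to reduce everything to a single application of strong recognizability, made possible because the hypothesis $|I| > 2L + K$ is calibrated so that the ``core'' interval $[m(I), M(I)-1]$ (and the analogous core of $J$) has length strictly greater than $K$.

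First I would check that both cores are long enough. Since consecutive 1-cutting bars of $\zeta$ are at most $L$ apart, the pieces $\mathbf{u}_I^{\mathrm{pref}}$ and $\mathbf{u}_I^{\mathrm{suf}}$ each have length at most $L$, so $|[m(I), M(I)-1]| \geq |I| - 2L + 1 > K$, and likewise for $[m(J), M(J)-1]$. Both cores are 1-fitted directly from the definition of $m$ and $M$.

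The main step is then a double application of Definition \ref{strong}. Write $\mathbf{w} = \mathbf{u}_I = \mathbf{u}_J$ and set $a = m(I) - i^-$. The 1-fitted factor $\mathbf{u}_{[m(I), M(I)-1]}$ identifies, via the two occurrences of $\mathbf{w}$, with the subinterval $J_0 = [j^- + a,\, j^- + M(I) - 1 - i^-]$ of $J$. Strong recognizability, applicable because $|J_0| > K$, forces $J_0$ to be 1-fitted; in particular $j^- + a$ is a 1-cutting bar of $\zeta$. By the minimality defining $m(J)$, this yields $m(J) - j^- \leq m(I) - i^-$. Swapping the roles of $I$ and $J$ and running the same argument starting from the core of $J$ gives the reverse inequality, so $m(I) - i^- = m(J) - j^-$. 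This says precisely that $\mathbf{u}_I^{\mathrm{pref}}$ and $\mathbf{u}_J^{\mathrm{pref}}$ are prefixes of $\mathbf{w}$ of the same length, hence equal, proving (1).

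Claim (2) follows by an identical maneuver at the other endpoint: the 1-fittedness of $J_0$ also gives that $j^- + M(I) - i^-$ is a 1-cutting bar, whence $M(J) \geq j^- + M(I) - i^-$, and the symmetric inequality yields equality of the suffix lengths and hence of the suffixes themselves. Claim (3) is then immediate: $\mathbf{u}_I = \mathbf{u}_J$ together with (1) and (2) forces the portions strictly between the common prefix and suffix to agree. I do not foresee a genuine obstacle beyond bookkeeping of the constants $K$, $L$ and $|I|$; the delicate point is verifying $|J_0| > K$ so that Definition \ref{strong} applies, which is exactly what the hypothesis $|I| > 2L + K$ secures.
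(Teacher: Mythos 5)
Your proposal is correct and follows essentially the same route as the paper: you translate the core $[m(I),M(I)-1]$ (whose length exceeds $K$ precisely because $|I|>2L+K$) into the corresponding subinterval of $J$, apply strong recognizability to force that subinterval to be $1$-fitted, and then use the symmetric argument with $I$ and $J$ exchanged to pin down $m(J)$ and $M(J)$; the paper phrases this symmetry as a contradiction with the minimality of $m(I)$ rather than as a pair of inequalities, but the content is identical. No gaps.
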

\begin{proof}
  For $I=[i^-,i^+]$ and $J=[j^-,j^+]$, take   $n=j^-+m(I)-i^-$.
  Since  $|I|>2L+K$, it is clear that $[m(I),M(I)-1]$ has length $r_I=M(I)-m(I)$ larger than $K$.
Observe that  $[n,n+r_I-1]\subset J$ and write
  \begin{equation*}\label{decompo}
  \mathbf{u}_{I}=\mathbf{u}_{[i^-,m(I)-1]}\mathbf{u}_{[m(I),M(I)-1]}\mathbf{u}_{[M(I),i^+]},\,\,\,\,\mathbf{u}_{J}=\mathbf{u}_{[j^-,n-1]}\mathbf{u}_{[n,n+r_I-1]}\mathbf{u}_{[n+r_I,j^+]}.
  \end{equation*}
  From the definition of  $n$, we have $\mbox{$ |\mathbf{u}_{[i^-,m(I)-1]}|=|\mathbf{u}_{[j^-,n-1]}| $}.$
 Hence,  since
$\mathbf{u}_{I}=\mathbf{u}_{J}$, we get
$$\mathbf{u}_{[i^-,m(I)-1]} =\mathbf{u}_{[j^-,n-1]},\quad \mathbf{u}_{[m(I),M(I)-1]}=\mathbf{u}_{[n,n+r_I-1]},\quad \mathbf{u}_{[M(I),i^+]}=\mathbf{u}_{[n+r_I,j^+]}.$$ Consequently, by recognizability, the interval $[n,n+r_I-1]$ is  $1$-fitted. Moreover, we must have $n=m(J)$. Otherwise, by definition of $m(J)$, we would have $n>m(J)$; but in this case, again by recognizability and reversing the rules of $I$ and $J$, $n'=i^-+m(J)-j^-$ would be a  $1$-cutting bar satisfying $i^-\leq n' <m(I)$, which is in contradiction with the definition of $m(I)$.  Hence $n=m(J)$, $\mathbf{u}_J^{\mathrm{pref}}=\mathbf{u}_{[j^-,n-1]}$ and we conclude that $\mathbf{u}_I^{\mathrm{pref}}=\mathbf{u}_J^{\mathrm{pref}}$. The remaining assertions of the lemma follow as a consequence of this.
  \end{proof}

Given  a strongly recognizable substitution $\zeta$ with recognizability index $K$ and a $2$-fitted interval $I$ (equivalently, $I$ is $1$-fitted with respect to $\zeta^2$), assume that  $|I|>LK$, where $L=\max\{|\zeta(\alpha)|\colon\,\alpha\in \mathcal{A}\}$.
Suppose that $\mathbf{u}_I=\mathbf{u}_J$. By recognizability, the interval $J$ is $1$-fitted and $\mathbf{u}_{[p_I+1,q_I]} =\mathbf{u}_{[p_J+1,q_J]}$. Set $I'=[p_I+1,q_I]$ and $J'= [p_J+1,q_J]$. Since $I$ is $2$-fitted, $I'$ is $1$-fitted. On the other hand, $|I'|>|I|/L>K$. Hence, again by recognizability, $J'$ is $1$-fitted (consequently, $J$ is $2$-fitted) and $\mathbf{u}_{[p_{I'}+1,q_{I'}]} =\mathbf{u}_{[p_{J'}+1,q_{J'}]}$. This proves that $\zeta^2$ is strongly recognizable. More generally, this argument can be extended in order to prove by induction the following.
\begin{prop}\label{pcuttingbars}
  Suppose that $\zeta$ is strongly recognizable.  Then $\zeta^k$ is strongly recognizable for each integer $k>1$.
\end{prop}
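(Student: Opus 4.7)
The plan is to argue by induction on $k$, taking the $k=2$ case already worked out in the paragraph preceding the proposition as the template. The base case $k=1$ is the hypothesis. For the inductive step, I would assume that $\zeta^{k-1}$ is strongly recognizable with some index $K_{k-1}$ and show that $\zeta^k$ is strongly recognizable with index at most $K_k:=\max\{K,LK_{k-1}\}$, where $K$ is the recognizability index of $\zeta$ and $L=\max\{|\zeta(\alpha)|\colon\,\alpha\in\mathcal A\}$.

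Concretely, I would take an interval $I$ that is $k$-fitted (equivalently, $1$-fitted with respect to $\zeta^k$) with $|I|>K_k$, and suppose $\mathbf{u}_I=\mathbf{u}_J$. The first key observation is that, since the $k$-cutting bars of $\zeta$ are a subset of the $1$-cutting bars of $\zeta$ (as $|\zeta^k(\mathbf{u}_{[1,p]})|+1=|\zeta(\mathbf{u}_{[1,q]})|+1$ for $q=|\zeta^{k-1}(\mathbf{u}_{[1,p]})|$), the interval $I$ is in particular $1$-fitted with respect to $\zeta$. Since $|I|>K$, strong recognizability of $\zeta$ gives that $J$ is $1$-fitted (w.r.t.\ $\zeta$) and that $\mathbf{u}_{I'}=\mathbf{u}_{J'}$ for $I'=[p_I+1,q_I]$, $J'=[p_J+1,q_J]$.

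The second key observation is that, since $I$ is $k$-fitted with respect to $\zeta$, the interval $I'$ is $(k-1)$-fitted with respect to $\zeta$; moreover $|I'|\geq |I|/L>K_{k-1}$. The inductive hypothesis applied to $\zeta^{k-1}$ therefore ensures that $J'$ is $(k-1)$-fitted and that the $(k-1)$-level desubstitutions of $\mathbf{u}_{I'}$ and $\mathbf{u}_{J'}$ coincide. Unwinding, $J$ is $k$-fitted and $\zeta^{-k}(\mathbf{u}_I)=\zeta^{-k}(\mathbf{u}_J)$, which is precisely strong recognizability of $\zeta^k$ with index $K_k$.

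I don't anticipate any real obstacle: the whole argument is simply the $k=2$ calculation with $\zeta^{k-1}$ playing the role previously played by $\zeta$ on the inside. The only thing to be careful about is bookkeeping of cutting bars at three different scales ($\zeta$, $\zeta^{k-1}$, $\zeta^k$), and in particular verifying that an interval $k$-fitted for $\zeta$ is the $\zeta$-image of an interval $(k-1)$-fitted for $\zeta$; this follows directly from the definition $E_k=\{|\zeta^k(\mathbf{u}_{[1,p]})|+1\}\cup\{1\}$.
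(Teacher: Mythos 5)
Your argument is correct and is essentially the paper's own: the paper carries out precisely this two-step desubstitution (apply recognizability of $\zeta$ to pass from $I,J$ to $I',J'$, then recognizability of the lower power to the inner intervals) for $k=2$ and remarks that it extends by induction, which is exactly the induction you have written out, with compatible index bookkeeping ($K_2\leq LK$ in the paper versus your $K_k=\max\{K,LK_{k-1}\}$). No gaps to report.
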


We say that a substitution is \emph{admissible} if it is primitive and strongly recognizable.

\begin{prop}\label{aperiodic}
  If $\zeta$ is admissible, any infinite word $\mathbf{v}$ in $X_\zeta$ is not shift periodic, that is there does not exist a finite word $\mathbf{w}$ with $\mathbf{v}=\mathbf{w}^\infty=\mathbf{w}\mathbf{w}\ldots$.
\end{prop}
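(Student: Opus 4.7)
The plan is an iterated desubstitution argument by contradiction: I would suppose that $\mathbf{v}\in X_\zeta$ satisfies $\mathbf{v}=\mathbf{w}^\infty$ with $p=|\mathbf{w}|\geq 1$ and deduce an impossible strictly decreasing sequence of positive integers.

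First I would reduce to the case $l_{\min}:=\min_{\alpha\in\mathcal{A}}|\zeta(\alpha)|\geq 2$. By primitivity there exists $m$ such that $|\zeta^m(\alpha)|\geq 2$ for every $\alpha$; Proposition \ref{pcuttingbars} keeps $\zeta^m$ strongly recognizable, and by Proposition \ref{primitive}(3) the languages satisfy $\mathcal{L}_\zeta=\mathcal{L}(\mathbf{u})=\mathcal{L}_{\zeta^m}$, so $X_\zeta=X_{\zeta^m}$ and $\zeta$ may be replaced by $\zeta^m$.

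Next I would produce two equal shifted factors inside the fixed point $\mathbf{u}$. Using $\mathcal{L}(\mathbf{v})=\mathcal{L}(\mathbf{u})$ (Proposition \ref{primitive}(3)), fix $I=[i^-,i^-+N+p-1]$ with $\mathbf{u}_I=\mathbf{v}_{[1,N+p]}$ and set $I_1=[i^-,i^-+N-1]$, $I_2=I_1+p$. Periodicity of $\mathbf{v}$ immediately gives $\mathbf{u}_{I_1}=\mathbf{v}_{[1,N]}=\mathbf{v}_{[p+1,N+p]}=\mathbf{u}_{I_2}$.

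The main step is the descent. Taking $N>2L+K$, Lemma \ref{presuf} yields $m(I_2)=m(I_1)+p$ together with $\mathbf{u}_{[m(I_1),M(I_1)-1]}=\mathbf{u}_{[m(I_2),M(I_2)-1]}$; since these are $1$-fitted factors of length exceeding $K$, Definition \ref{strong} delivers $\mathbf{u}_{J_1}=\mathbf{u}_{J_2}$ with $J_1=[p_{I_1}+1,q_{I_1}]$ and $J_2=[p_{I_2}+1,q_{I_2}]$. Setting $p':=p_{I_2}-p_{I_1}$ one has $J_2=J_1+p'$ and
\[
|\zeta(\mathbf{u}_{[p_{I_1}+1,p_{I_2}]})|=m(I_2)-m(I_1)=p;
\]
the assumption $l_{\min}\geq 2$ then forces $1\leq p'\leq p/2$, the strict positivity because $p'=0$ would give $p=0$. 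The pair $(J_1,J_2)$ is thus an instance of the same configuration at strictly smaller shift $p'$.

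Finally I would iterate this descent. Fix $r=\lceil\log_2 p\rceil+1$ at the outset, and choose the initial $N$ large (roughly $N>L^{r}(4L+K)$) so that the crude estimate $|I_1^{(s+1)}|\geq|I_1^{(s)}|/L-2$ keeps every working interval of length $>2L+K$ throughout the $r$ levels. One then produces a sequence $p=p^{(0)}>p^{(1)}>\cdots>p^{(r)}$ of positive integers with $p^{(s+1)}\leq p^{(s)}/2$, whence $1\leq p^{(r)}\leq p/2^r<1$, a contradiction.

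The main obstacle is the quantitative bookkeeping during the descent: each application of Lemma \ref{presuf} shrinks the ambient interval by a factor close to $L$, so the number $r$ of descent steps and the initial length $N$ must be committed to \emph{in advance} so that the lemma can still be invoked at the deepest level; once this a priori choice is arranged, the strict halving of the shift at each step makes the final contradiction unavoidable.
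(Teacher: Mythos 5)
Your argument is correct, but it is genuinely different from the one in the paper. The paper fixes the period $n_0$ of $\mathbf{v}$, passes once to a high power $\zeta^n$ chosen so that $|\zeta^n(\alpha)|>2n_0$ for every letter, and then gets a contradiction in a single stroke: shifting $\mathbf{v}$ by $n_0$ produces two decompositions of the same long factor into a prefix followed by $\zeta^n$-blocks whose prefixes have different lengths, which is impossible by the uniqueness statement of Lemma \ref{presuf}. You instead stay at the level of $\zeta$ itself (after the harmless reduction, via Proposition \ref{pcuttingbars} and Proposition \ref{primitive}(3), to $\min_\alpha|\zeta(\alpha)|\geq 2$) and run an infinite descent on the period: one application of Lemma \ref{presuf} plus one desubstitution via Definition \ref{strong} turns a pair of equal factors at shift $p$ into a pair at shift $p'$ with $1\leq p'\leq p/2$, and iterating $\lceil\log_2 p\rceil+1$ times gives $1\leq p^{(r)}<1$. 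Your identification $|\zeta(\mathbf{u}_{[p_{I_1}+1,p_{I_2}]})|=m(I_2)-m(I_1)=p$, forcing both $p'\geq 1$ and $p'\leq p/2$, is the right key computation, and the a priori choice of the depth $r$ and of the initial length $N$ (so that every level still satisfies the $|I|>2L+K$ hypothesis of Lemma \ref{presuf}) is exactly the bookkeeping the descent needs; you correctly flag it and your crude bound $|I_1^{(s+1)}|\geq|I_1^{(s)}|/L-2$ suffices. The trade-off is that the paper's choice of $n$ with $|\zeta^n(\alpha)|>2n_0$ buys a one-shot contradiction with no iteration, at the cost of working with the recognizability data of $\zeta^n$, whereas your Moss\'{e}-style descent needs only the reduced $\zeta$ but must track interval lengths through $r$ levels; both routes rest entirely on Lemma \ref{presuf} and the strong recognizability of powers of $\zeta$.
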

\begin{proof}
  The argument is standard. Let $\mathbf{u}=\alpha_1\alpha_2\alpha_3\ldots$ be a fixed point of $\zeta$ and suppose that $\mathbf{v}\in X_\zeta=\overline{\mathcal{O}(\mathbf{u})}$ is periodic.
  Since $\mathbf{v}$ is periodic, there exists $n_0$ so that $\sigma^{n_0}(\mathbf{v})=\mathbf{v}$. By primitivity, we can choose $n$ satisfying $|\zeta^n(\alpha)|>2n_0$ for all $\alpha\in \mathcal{A}$. By minimality, there exists an arbitrarily large interval $I=[i^-,i^+]$ so that $\mathbf{v}=\mathbf{u}_I\tilde{\mathbf{v}}$, where $\tilde{\mathbf{v}}$ is an infinite factor of $\mathbf{v}$. Assume that $|I|$ is much larger than the recognizability index of $\zeta^n$.   Write
  \begin{align*}
\mathbf{u}_I&=\mathbf{u}_{I,\zeta^n}^{\mathrm{pref}}\zeta^n(\alpha_{p_I+1})\zeta^n(\alpha_{p_I+2})\ldots \zeta^n(\alpha_{q_I})\mathbf{u}_{I,\zeta^n}^{\mathrm{suf}},
 \end{align*}
 where $p_I$ and $q_I$ are defined with respect to the substitution $\zeta^n$.
 We consider now two cases. Firstly, if $n_0\leq  |\mathbf{u}_{I,\zeta^n}^{\mathrm{pref}}|$, then, since $\sigma^{n_0}(\mathbf{v})=\mathbf{v}$, we have
 $$\mathbf{u}_{I,\zeta^n}^{\mathrm{pref}}\zeta^n(\alpha_{p_I+1})\zeta^n(\alpha_{p_I+2})\ldots =\mathbf{w}\zeta^n(\alpha_{p_I+1})\zeta^n(\alpha_{p_I+2})\ldots ,$$
where $\mathbf{w}$ is the prefix of $\mathbf{u}_{I,\zeta^n}^{\mathrm{pref}}$ with $|\mathbf{w}|=|\mathbf{u}_{I,\zeta^n}^{\mathrm{pref}}|-n_0<|\mathbf{u}_{I,\zeta^n}^{\mathrm{pref}}|$, which contradicts the recognizability property of Lemma \ref{presuf}. Secondly, if   $n_0> |\mathbf{u}_{I,\zeta^n}^{\mathrm{pref}}|$, then we can write, since $\sigma^{n_0}(\mathbf{v})=\mathbf{v}$,
 $$\mathbf{u}_{I,\zeta^n}^{\mathrm{pref}}\zeta^n(\alpha_{p_I+1})\zeta^n(\alpha_{p_I+2})\ldots =\mathbf{w}\zeta^n(\alpha_{p_I+2})\zeta^n(\alpha_{p_I+3})\ldots ,$$
 where $\mathbf{w}$ is the sufix of $\zeta^n(\alpha_{p_I+1})$ with $$|\mathbf{w}|= |\zeta^n(\alpha_{p_I+1})| -(n_0-|\mathbf{u}_{I,\zeta^n}^{\mathrm{pref}}|).$$
 Since $|\zeta^n(\alpha)|>2n_0$ for all $\alpha\in\mathcal{A}$, we see from this that $|\mathbf{w}|>n_0>|\mathbf{u}_{I,\zeta^n}^{\mathrm{pref}}|$, which contradicts the recognizability property of Lemma \ref{presuf}. We conclude that $\mathbf{v}$ cannot be periodic.
 \end{proof}

\section{Avoiding arbitrarily large uniform monochromatic $k$-powers}\label{JP1}

 J. Justin and G. Pirillo in \cite{JP} constructed a finite coloring of the factors of the Thue-Morse word which avoids   uniform monochromatic $3$-powers. The  Thue-Morse word is a fixed point of the admissible substitution $\zeta_T$ on $\mathcal{A}=\{0,1\}$ defined by $\zeta_T(0)=01$ and $\zeta_T(1)=10$.  Next we generalize this construction to a large class of substitutions $\zeta$ on a finite alphabet.

\begin{thm}\label{unifo}
Let $\zeta$ be an admissible substitution of constant length $L$ on a two-letter alphabet $\mathcal{A}$ with recognizability index $K$.    Let $\mathbf{u}=\alpha_1\alpha_2\ldots$  be a fixed point of $\zeta$.
Then there exists a finite coloring   of $\mathcal{L}(\mathbf{u})$ avoiding uniform monochromatic $k$-powers for all $k$ sufficiently large.
\end{thm}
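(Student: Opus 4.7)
The plan is to define a finite coloring that encodes the position of each long enough factor with respect to the $1$-cutting bars of $\zeta$, and then rule out large uniform monochromatic $k$-powers by alignment and iterated desubstitution. For $\mathbf{w}\in\mathcal{L}(\mathbf{u})$ with $|\mathbf{w}|>2L+K$, Lemma \ref{presuf} shows that the prefix $p_\zeta(\mathbf{w})=\mathbf{u}_{I,\zeta}^{\mathrm{pref}}$, the suffix $s_\zeta(\mathbf{w})=\mathbf{u}_{I,\zeta}^{\mathrm{suf}}$, and hence the first letter $\alpha(\mathbf{w})$ and last letter $\beta(\mathbf{w})$ of the desubstituted middle $\zeta^{-1}(\mathbf{u}_{[m(I),M(I)-1]})$ depend only on $\mathbf{w}$. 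I would set
\[
c(\mathbf{w})=\bigl(p_\zeta(\mathbf{w}),\, s_\zeta(\mathbf{w}),\, \alpha(\mathbf{w}),\, \beta(\mathbf{w})\bigr)
\]
and give each of the (finitely many) short factors $|\mathbf{w}|\le 2L+K$ its own private color. The resulting palette is finite.

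Now suppose $\mathbf{w}_1\mathbf{w}_2\cdots\mathbf{w}_k$ is a uniform monochromatic $k$-power with $|\mathbf{w}_j|=m>2L+K$. The common color forces all $\mathbf{w}_j$ to share a prefix $p$ of length $a$ and a suffix $s$ of length $b$. Since the $1$-cutting bars of $\zeta$ sit at positions $\equiv 1\pmod L$, the equality $|p_\zeta(\mathbf{w}_j)|=a$ for all $j$, applied to the starting positions $i_j=i_1+(j-1)m$, yields $L\mid m$; a length count then gives $a+b\equiv 0\pmod L$, so $a+b\in\{0,L\}$. The middle of the whole concatenation $\mathbf{w}':=\mathbf{w}_1\cdots\mathbf{w}_k$ then desubstitutes to a factor of $\mathbf{u}$ of the form
\[
\tilde{\mathbf{w}}=\tilde{\mathbf{w}}_1\,\gamma\,\tilde{\mathbf{w}}_2\,\gamma\cdots\gamma\,\tilde{\mathbf{w}}_k,
\]
with $\gamma=\zeta^{-1}(sp)$ a common glue of length $(a+b)/L$ and each $\tilde{\mathbf{w}}_j$ of common length $\tilde m=m/L-|\gamma|$; the inclusion of $\alpha(\mathbf{w})$ and $\beta(\mathbf{w})$ in $c$ guarantees that the boundary letters match across the desubstitution.

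To finish I iterate: by Proposition \ref{pcuttingbars}, $\zeta^n$ is strongly recognizable for every $n$, so one can enrich the coloring with the analogous data recorded at levels $\zeta^2,\ldots,\zeta^N$ (for some fixed $N$) and still have a finite palette. This forces the stronger alignment $L^n\mid m$ for each $n\le N$ and a corresponding hierarchy of glues, so that after $N\asymp\log_L m$ iterations the reduced blocks fall below $2L+K$. In the short-factor regime monochromaticity becomes equality, and the $k$-fold repetition of a single short factor gives a classical $k$-power in $\mathbf{u}$; the aperiodicity of $\mathbf{u}$ (Proposition \ref{aperiodic}), combined with the bounded critical exponent of aperiodic primitive substitutive words, yields the desired $k_0$ beyond which no uniform monochromatic $k$-power survives. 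The main obstacle is this iteration: the glues $\gamma$ mean that after one desubstitution the structure $\tilde{\mathbf{w}}_1\gamma\cdots\gamma\tilde{\mathbf{w}}_k$ is not literally a uniform monochromatic $k$-power in the original coloring, and the enrichment of $c$ must be arranged so that the alignment hypothesis survives every level without the palette becoming infinite; this is where the two-letter hypothesis and the constant length $L$ will be used to control the combinatorics.
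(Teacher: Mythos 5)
Your coloring and your first reductions are sound and coincide with the first half of the paper's argument: if the common block length $m$ is not a multiple of $L$, consecutive blocks have different prefix lengths and hence different colors, and otherwise $a+b\in\{0,L\}$. The genuine gap is exactly the point you flag at the end: the iteration. After one desubstitution your structure is $\tilde{\mathbf{w}}_1\gamma\tilde{\mathbf{w}}_2\gamma\cdots\gamma\tilde{\mathbf{w}}_k$, which is not a uniform monochromatic $k$-power, and your proposed repair --- enriching the coloring with analogous data at levels $\zeta^2,\ldots,\zeta^N$ for a \emph{fixed} $N$ --- cannot close it: pushing blocks of length $m$ below the threshold $2L+K$ requires about $\log_L m$ levels, which is unbounded over $\mathcal{L}(\mathbf{u})$, so either $N$ must grow with $m$ (infinite palette) or arbitrarily long blocks survive all $N$ recorded levels with the glue problem intact. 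Recording the boundary letters $\alpha(\mathbf{w}),\beta(\mathbf{w})$ only identifies the glue; it does not remove it. As written, your argument establishes the conclusion only for block lengths not divisible by $L$ and for short blocks, so it is incomplete.

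The paper closes this gap with two ideas absent from your proposal. First, when $a+b=L$ it does not desubstitute the middles of the individual blocks (which is what creates the glue); it \emph{translates} every interval by the same offset, $\tau(I_i)=I_i+a$ or $\tau(I_i)=I_i-b$, so that the translated intervals are again consecutive, of the same length, and now $1$-fitted. The direction is chosen according to whether $|\mathbf{u}_I^{\mathrm{suf}}|\geq r$, where $r$ is a coordinate at which $\zeta(0)_r\neq\zeta(1)_r$; injectivity of $\zeta_r^{\mathrm{pref}}$ and $\zeta_r^{\mathrm{suf}}$ on letters (this is where the two-letter hypothesis enters) together with Lemma \ref{presuf} makes the translated word depend only on $\mathbf{w}$ and not on the occurrence. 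Second, and crucially, the coloring is defined \emph{recursively} so that these operations preserve colors by construction: $c(\mathbf{w})=c(\mathbf{u}_{\tau(I)})$ when $|\mathbf{u}_I^{\mathrm{suf}}|+|\mathbf{u}_I^{\mathrm{pref}}|=L$, and $c(\mathbf{w})=c(\zeta^{-1}(\mathbf{w}))$ when $\mathbf{w}$ is $1$-fitted, the recursion terminating either at a word with $0<|\mathbf{u}_I^{\mathrm{suf}}|+|\mathbf{u}_I^{\mathrm{pref}}|\neq L$, colored by the pair $(\mathbf{u}_I^{\mathrm{suf}},\mathbf{u}_I^{\mathrm{pref}})$, or at a word of length at most $2L+K$, colored by itself; hence the palette is finite. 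With this self-referential definition, a uniform monochromatic $k$-power with long blocks is carried by $\tau$ and $\zeta^{-1}$ to another uniform monochromatic $k$-power with strictly shorter blocks, and finitely many steps reduce everything to the short case, where aperiodicity (Proposition \ref{aperiodic}) supplies the uniform bound on $k$. If you want to salvage your approach, replace the static multi-level coloring by this recursive one.
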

\begin{proof}
We start by observing that
an admissible substitution $\zeta$  on a two-letter alphabet $\mathcal{A}=\{0,1\}$  is always one-to-one on letters (otherwise the fixed word would be periodic, in contradiction with Proposition \ref{aperiodic}). Hence, for some $r>0$, we have $\zeta(0)_r\neq \zeta(1)_r$, which means that the substitutions $\zeta_{r}^{\mathrm{pref}}$ and $\zeta_{r}^{\mathrm{suf}}$ on $\mathcal{A}$ defined by
\begin{equation}\label{subpre}
\zeta^{\mathrm{pref}}_{r}(\alpha)=\zeta(\alpha)_1\zeta(\alpha)_2\ldots\zeta(\alpha)_r,\quad \zeta^{\mathrm{suf}}_{r}(\alpha)=\zeta(\alpha)_r\zeta(\alpha)_{r+1}\ldots\zeta(\alpha)_{L},
\end{equation}
 with $\alpha\in\{0,1\}$, are both one-to-one on letters.

Define a finite coloring $c$ of $\mathcal{L}(\mathbf{u})$ as follows. For  $\mathbf{w}\in\mathcal{L}(\mathbf{u})$:
\begin{enumerate}
\item if $|\mathbf{w}|\leq 2L+K$, then  $c(\mathbf{w})=\mathbf{w}$.
\item if $|\mathbf{w}|> 2L+K$ and $\mathbf{w}= \mathbf{u}_I$ for some integer interval $I$, we have three cases.
\begin{enumerate}
  \item if $0<|\mathbf{u}_I^{\mathrm{suf}}|+|\mathbf{u}_I^{\mathrm{pref}}|\neq L$, then $c(\mathbf{w})=(\mathbf{u}_I^{\mathrm{suf}},\mathbf{u}_I^{\mathrm{pref}})$;
  \item if $I$ is $1$-fitted, that is $|\mathbf{u}_I^{\mathrm{suf}}|=|\mathbf{u}_I^{\mathrm{pref}}|=0$, then $c(\mathbf{w})=c(\zeta^{-1}(\mathbf{u}_I))$;
   \item if $ |\mathbf{u}_I^{\mathrm{suf}}|+|\mathbf{u}_I^{\mathrm{pref}}|=L$, then $c(\mathbf{w})=c(\mathbf{u}_{\tau(I)})$, where
   $$\tau(I)=\left\{\begin{array}{l}
   \mbox{$[m(I),\overline M(I)-1]$ if $|\mathbf{u}_I^{\mathrm{suf}}|\geq r$} \\
                   \mbox{${[\underline m(I),M(I)-1]}$ if $|\mathbf{u}_I^{\mathrm{suf}}|< r$}
                          \end{array}\right..$$
  \end{enumerate}
Here $\overline M(I)$ is the successor of $M(I)$ and $\underline m(I)$ is the predecessor of $m(I)$ in the sequence of $1$-cutting bars of $\zeta$.
\end{enumerate}


\begin{lem}
  The finite coloring $c$ is well-defined.
\end{lem}
\begin{proof}
  We have to check that, for $|\mathbf{w}|>2L+K$, the definition of $c(\mathbf{w})$ does not depend on $I$. This is a direct consequence of our recognizability condition (cf. Lemma \ref{presuf}) when $0<|\mathbf{u}_I^{\mathrm{suf}}|+|\mathbf{u}_I^{\mathrm{pref}}|\neq L$.

 If   $|\mathbf{u}_I^{\mathrm{suf}}|=|\mathbf{u}_I^{\mathrm{pref}}|=0$, that is $I=[m(I),M(I)-1]$, we have  $$\mathbf{u}_I=\zeta(\alpha_{p_I+1})\ldots\zeta(\alpha_{q_I}) $$ and
$\zeta^{-1}(\mathbf{u}_I)=\alpha_{p_I+1}\ldots \alpha_{q_I}.$ Take any interval $J$ such that $\mathbf{u}_I=\mathbf{u}_J$. By recognizability, $J$ is also $1$-fitted and
and we have  $\zeta^{-1}(\mathbf{u}_J)=\zeta^{-1}(\mathbf{u}_I)$.

   For the case (c),  we can write  $$\mathbf{u}_{[m(I),\overline{M}(I)-1]}=\mathbf{u}_{[m(I),M(I)-1]}\mathbf{u}_{[M(I),\overline{M}(I)-1]}$$ with $\mathbf{u}_{[M(I),\overline{M}(I)-1]}=\zeta(\alpha)$ for some letter $\alpha\in\mathcal{A}$. If $|\mathbf{u}_I^{\mathrm{suf}}|\geq r$, such letter is uniquely determined by $\mathbf{u}_I^{\mathrm{suf}}$, since $\zeta_r^{\mathrm{pref}}$ is one-to one on letters.
  Hence, if $\mathbf{u}_I=\mathbf{u}_J$, from Lemma \ref{presuf} we conclude that
  $$\mathbf{u}_{[m(I),\overline{M}(I)-1]}=\mathbf{u}_{[m(I),M(I)-1]}\zeta(\alpha)=\mathbf{u}_{[m(J),M(J)-1]}\zeta(\alpha)=\mathbf{u}_{[m(J),\overline{M}(J)-1]}.$$
Similarly, if $|\mathbf{u}_I^{\mathrm{suf}}|< r$ and $\mathbf{u}_I=\mathbf{u}_J$ we have  $\mathbf{u}_{[\underline{m}(I),M(I)-1]}=\mathbf{u}_{[\underline{m}(J),M(J)-1]}$. Hence the definition of $c(\mathbf{w})$ does not depend on $I$.
\end{proof}

Next we prove that the finite coloring $c$ avoids uniform monochromatic $k$-powers for all $k$ sufficiently large. Suppose that, contrary to our claim, there exists a  uniform monochromatic $k$-power $\mathbf{w}_1\mathbf{w}_2\ldots \mathbf{w}_k$, with $|\mathbf{w}_i|\leq 2L+K$, for each $k$. By definition of $c$, we have in this case $c(\mathbf{w}_i)= c(\mathbf{w}_j)$ if and only if $\mathbf{w}_i= \mathbf{w}_j$. Hence  $\mathbf{w}_1\mathbf{w}_2\ldots \mathbf{w}_k=\mathbf{w}_1^k$.
Since the subset of factors $\mathbf{w}_1\in\mathcal{L}(\mathbf{u})$ with $\mathbf{w}_1\leq 2L+K$ is finite, there exists $\mathbf{w}\in \mathcal{L}(\mathbf{u})$ such  that
 $\mathbf{w}^k$ occurs in $\mathbf{u}$ for all $k>0$. But this means that the periodic word $\mathbf{w}^\infty=\mathbf{w}\mathbf{w}\ldots$ is in the closure of the $\sigma$-orbit of $\mathbf{u}$, which contradicts Proposition \ref{aperiodic}.

Assume now that $\mathbf{w}_1\mathbf{w}_2\ldots \mathbf{w}_k$ is a uniform  $k$-power of $\mathbf{u}$ with $|\mathbf{w}_i|>2L+K$. Set $\mathbf{w}_i=\mathbf{u}_{I_i}$, with $I_1,I_2,\ldots,I_k$ consecutive intervals. If $|\mathbf{u}_{I_i}|$ is not a multiple of $L$, then, since the substitution $\zeta$ is of constant length $L$, we must have   $|\mathbf{u}_{I_i}^{\mathrm{suf}}|+|\mathbf{u}_{I_i}^{\mathrm{pref}}|\neq L$ for all $i$. In this case, it is clear from the definition of $c$ that $c( \mathbf{w}_i)\neq c( \mathbf{w}_{i+1}) $: as a matter of fact, since $|\mathbf{u}^\mathrm{suf}_{I_{i}}|+|\mathbf{u}^\mathrm{pref}_{I_{i+1}}|= L$ and  $ |\mathbf{u}_{I_i}^{\mathrm{suf}}|+|\mathbf{u}_{I_i}^{\mathrm{pref}}|\neq L$,
   we have $|\mathbf{u}^\mathrm{pref}_{I_{i}}|\neq |\mathbf{u}^\mathrm{pref}_{I_{i+1}}|$. Note that, in this particular case, it is sufficient to take $k=2$.

If $|\mathbf{u}_{I_i}|$ is multiple of $L$, then we have either $|\mathbf{u}_{I_i}^{\mathrm{suf}}|+|\mathbf{u}_{I_i}^{\mathrm{pref}}|= L$ for all $i$ or $|\mathbf{u}_{I_i}^{\mathrm{suf}}|+|\mathbf{u}_{I_i}^{\mathrm{pref}}|= 0$ for all $i$.
In both cases, after a finite number of ``translations" $\tau$ (observe that, for a such uniform $k$-power,
$\mathbf{u}_{\tau(I_1)}\ldots \mathbf{u}_{\tau(I_k)}$ is a  uniform  $k$-power of $\mathbf{u}$)
and ``desubstitutions" $\zeta^{-1}$ we fall in one of the previous cases. Hence we conclude that  $c$ avoids uniform monochromatic $k$-factors for all $k$ sufficiently large.
\end{proof}
\begin{rem}
  Theorem \ref{unifo} holds for admissible substitutions of constant length $L$ on an arbitrary finite alphabet $\mathcal{A}$ if one considers the following additional condition: for some $r\in \{1,\ldots,L\}$, the substitutions $\zeta_{r}^{\mathrm{pref}}$ and $\zeta_{r}^{\mathrm{suf}}$ on $\mathcal{A}$ defined by \eqref{subpre}
are both one-to-one on letters.  The proof is exactly the same.
\end{rem}

\begin{rem}
  If $\zeta$ is admissible, then $\mathcal{L}_\zeta=\mathcal{L}(\mathbf{u})$, where   $\mathbf{u}$ is the fixed point of $\zeta$, and $X_\zeta=\overline{\mathcal{O}(\mathbf{u})}$. Consequently, with respect to the finite coloring $c$ of $\mathcal{L}(\mathbf{u})$ defined above, any $\mathbf{v}\in \overline{\mathcal{O}(\mathbf{u})}$ avoids arbitrarily large  uniform monochromatic $k$-powers.
\end{rem}
\begin{eg}\label{1111}
  The one-to-one substitution $\zeta$ on the alphabet $\{0,1\}$ defined by
  \begin{equation*}\label{exem}
  \zeta(0)=0100,\quad\zeta(1)=1101
  \end{equation*}
   is an admissible substitution of constant length $L=4$ with fixed word
   $$\mathbf{u}= 01001101010001001101\ldots $$
   Since the factors $0100$ and $1101$ can only occur in $1$-fitted intervals,  $\zeta$ is strongly recognizable, with recognizability index $K=0$. Taking $r=1$ and the coloring $c$ of $\mathcal{L}(\mathbf{u})$ as defined in the proof of the previous theorem, we see that $\mathbf{u}$ avoids arbitrarily large uniform monochromatic $k$-powers. More precisely, we can take $k=4$.
   In order to check that  $\mathbf{u}$ avoids  uniform monochromatic $4$-powers, one only has to investigate factors of $\mathbf{u}$ of the form $\mathbf{w}_1\mathbf{w}_2\mathbf{w}_3\mathbf{w}_4$ with $l=|\mathbf{w}_i| \leq 8$. In this case, $\mathbf{w}_1\mathbf{w}_2\mathbf{w}_3\mathbf{w}_4$ is a uniform monochromatic $4$-power if, and only if, $\mathbf{w}_1\mathbf{w}_2\mathbf{w}_3\mathbf{w}_4=\mathbf{w}_1^4$. For example, factors of the form $1^4$ and $0^4$ do not occur in $\mathbf{u}$, and a similar analysis can be done for the remaining  values of $l\leq 8$.
\end{eg}

\section{Avoiding arbitrarily large monochromatic $k$-powers with uniformly bounded gaps.}\label{JP2}

 J. Justin and G. Pirillo \cite{JP} considered the infinite word
  $$\mathbf{u}_J=0000100001000010000101111\ldots,$$
  which is a fixed point of the substitution $\zeta_J$ defined by $\zeta_J(0)=00001$ and $\zeta_J(1)=11110$, and constructed a $3$-coloring of its factors
 which avoids arbitrarily large  monochromatic $k$-powers with uniformly bounded gaps.  In this section we generalize this construction to a certain class of infinite words over a finite alphabet $\mathcal{A}$. Recall that  $\mathbf{w}=\mathbf{w}_1\mathbf{w}_2\ldots \mathbf{w}_k\in\mathcal{A}^*$ is an \emph{abelian $k$-power}  if any two of  the factors $\mathbf{w}_i$ are permutations of each other.

\begin{thm}Let $\mathbf{u}$ be an infinite word over $\mathcal{A}$.
 Assume that each factor of $\mathbf{u}$ occurs in $\mathbf{u}$ with a uniform frequency and that $\mathbf{u}$ avoids abelian $k_0$-powers for some $k_0>0$. Then there exists a  finite coloring $c$ of $\mathcal{L}(\mathbf{u})$ which avoids arbitrarily large  monochromatic $k$-powers with uniformly bounded gaps.
\end{thm}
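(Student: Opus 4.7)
The plan is to define a finite coloring via the sign vector of the frequency deviations. For each $\mathbf{w}\in\mathcal{L}(\mathbf{u})$ and $\alpha\in\mathcal{A}$, put $\delta_\alpha(\mathbf{w})=|\mathbf{w}|_\alpha-d_\alpha|\mathbf{w}|$ and set
\[c(\mathbf{w})=\bigl(\mathrm{sign}(\delta_\alpha(\mathbf{w}))\bigr)_{\alpha\in\mathcal{A}}\in\{-,0,+\}^{|\mathcal{A}|},\]
which has at most $3^{|\mathcal{A}|}$ values. Suppose for contradiction that for some fixed $p>0$ there exist monochromatic $k$-powers $\mathbf{W}=\mathbf{w}_1\cdots\mathbf{w}_k$ with $|\mathbf{w}_i|<p$ for arbitrarily large $k$; passing to a subsequence we may assume all such $k$-powers share a common color $(s_\alpha)_{\alpha\in\mathcal{A}}$.

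If some coordinate $s_{\alpha_0}\neq 0$, say $s_{\alpha_0}=+$, then each $\delta_{\alpha_0}(\mathbf{w}_i)$ is strictly positive; because $|\mathbf{w}_i|<p$ the deviations take only finitely many values, so there is a smallest $\eta>0$ below which none can fall. Summing,
\[|\delta_{\alpha_0}(\mathbf{W})|=\sum_{i=1}^{k}\delta_{\alpha_0}(\mathbf{w}_i)\ge k\eta,\]
whereas uniform frequency forces $|\delta_{\alpha_0}(\mathbf{W})|\le\varepsilon|\mathbf{W}|\le\varepsilon kp$ as soon as $|\mathbf{W}|$ is large; choosing $\varepsilon=\eta/(2p)$ yields a contradiction for $k$ large enough, so this case is dispatched.

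The delicate case is $s_\alpha=0$ for every $\alpha$, which forces $|\mathbf{w}_i|_\alpha=d_\alpha|\mathbf{w}_i|$ for all $i$ and $\alpha$. If some $d_\alpha$ is irrational this is impossible for a nonempty $\mathbf{w}_i$ and the case is vacuous; otherwise $\mathbf{d}=(p_\alpha/q)_\alpha$ is rational with common denominator $q$, each $l_i:=|\mathbf{w}_i|$ is a positive multiple of $q$ with $l_i\le mq$ where $m=\lfloor p/q\rfloor$, and each $\mathbf{w}_i$ has Parikh vector $l_i\mathbf{d}$ determined entirely by its length. Here no scalar deviation accumulates and the decisive ingredient is Szemer\'{e}di's theorem applied to the partial-sum set
\[T=\{S_j/q:0\le j\le k\}\subset\{0,1,\ldots,km\},\quad S_j=l_1+\cdots+l_j.\]
Since $|T|=k+1$ has density at least $1/m$ in $\{0,1,\ldots,km\}$, once $k$ exceeds a threshold depending only on $m$ and $k_0$ the set $T$ contains an arithmetic progression $S_{j_s}/q=t_0+s\Delta$, $s=0,\ldots,k_0$, with indices $j_0<\cdots<j_{k_0}$. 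Setting $\mathbf{v}_s=\mathbf{w}_{j_{s-1}+1}\cdots\mathbf{w}_{j_s}$ produces $k_0$ consecutive factors of $\mathbf{u}$, each of length $q\Delta$ and---since every $\mathbf{w}_i$ is balanced---each with Parikh vector $q\Delta\cdot\mathbf{d}$. Hence $\mathbf{v}_1\cdots\mathbf{v}_{k_0}$ is an abelian $k_0$-power occurring in $\mathbf{u}$, contradicting the hypothesis.

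The main obstacle is precisely this zero-sign case: after the telescoping argument eliminates configurations with a nonzero sign, one is left with perfectly balanced monochromatic blocks whose Parikh vectors are pinned down by their lengths alone, so no linear invariant can accumulate. Szemer\'{e}di's theorem applied to the lattice of cumulative block-boundaries is what rescues the argument, extracting an arithmetic progression and therefore an embedded abelian $k_0$-power, and thereby converting the abelian-power-avoidance hypothesis into the desired upper bound on $k$.
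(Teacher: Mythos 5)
Your proof is correct, and its overall architecture matches the paper's: color factors by comparing their letter counts with the uniform frequencies $d_\alpha$, rule out a repeated ``unbalanced'' color by summing deviations along the $k$-power and contradicting uniform frequency (your explicit summation $\sum_i\delta_{\alpha_0}(\mathbf{w}_i)\ge k\eta$ versus $\le\varepsilon kp$ is in fact a cleaner rendering of the paper's brief argument), and then, in the perfectly balanced case, extract $k_0$ consecutive groups of blocks of equal length, whose Parikh vectors $L\cdot\mathbf{d}$ coincide, producing a forbidden abelian $k_0$-power. Where you genuinely diverge is in how those equal-length groups are obtained. The paper invokes a repetitivity theorem for the length morphism $\varphi:\mathcal{A}_p^*\to\mathbb{N}$, $\varphi(\mathbf{v})=|\mathbf{v}|$ (Theorem 4.2.1 of Pirillo, in the spirit of Justin--Pirillo), which directly yields consecutive factors $\mathbf{v}_1,\ldots,\mathbf{v}_{k_0}$ of equal length; you instead prove this ad hoc by applying Szemer\'{e}di's theorem to the set of partial sums $S_j$ of the block lengths, which has density at least $1/m$ (indeed at least $1/p$, so your normalization by $q$ and the rational/irrational case split, while correct, are dispensable --- one can run the same argument on the $S_j$ themselves and note the irrational case simply never arises because the zero-sign color forces $d_\alpha|\mathbf{w}_i|\in\mathbb{N}$), and an arithmetic progression $S_{j_0}<\cdots<S_{j_{k_0}}$ of common difference $\Delta$ yields the consecutive equal-length groups. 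Your route is self-contained apart from citing Szemer\'{e}di, which is much heavier machinery than strictly necessary (the repetitivity result used in the paper can be established from van der Waerden's theorem), but it makes the combinatorial mechanism behind the reference completely explicit; the paper's route is shorter at the cost of outsourcing exactly this step. Also note your sign-vector coloring uses up to $3^{|\mathcal{A}|}$ colors where the paper's ``set of over-represented letters'' coloring uses $2^{|\mathcal{A}|}$, an immaterial difference.
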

\begin{proof}
  By hypothesis, any letter $\alpha\in\mathcal{A}$ occurs in $\mathbf{u}$ with uniform frequency, say $d_\alpha>0$. We have $\sum_{\alpha\in\mathcal{A}}d_\alpha=1$. Define the following  finite coloring of $\mathcal{L}(\mathbf{u})$:
  $$c(\mathbf{w})= \{\alpha\in\mathcal{A}\colon\,\, \frac{|\mathbf{w}|_\alpha}{|\mathbf{w}|}>d_\alpha\}.$$
 In particular, if $c(\mathbf{w})=\emptyset$ then $\frac{|\mathbf{w}|_\alpha}{|\mathbf{w}|}=d_\alpha$ for all $\alpha\in\mathcal{A}$. Toward a contradiction, suppose that $c$ admits arbitrarily large  monochromatic $k$-powers with uniformly bounded gaps. Then there exists $p>0$ such that  $\mathbf{u}$ admits a monochromatic $k$-power $\mathbf{w}=\mathbf{w}_1\mathbf{w}_2\ldots \mathbf{w}_k$ with gaps bounded by $p$, for arbitrarily large $k$.

 We claim that, in the monochromatic $k$-power $\mathbf{w}=\mathbf{w}_1\mathbf{w}_2\ldots \mathbf{w}_k$, we must have  $c(\mathbf{w}_i)=\emptyset$ for all $i$. As a matter of fact, otherwise we would have, for some letter $\alpha$, $|\mathbf{w}_i|_\alpha >d_\alpha|\mathbf{w}_i|$ for all $i$ and arbitrarily large $k$. Since we are suposing that $|\mathbf{w}_i|<p$, then there would exist some $\epsilon_0>0$ such that $|\mathbf{w}_i|_\alpha > d_\alpha|\mathbf{w}_i|+\epsilon_0$ for all $i$ and arbitrarily large $k$. Consequently,
$$\frac{|\mathbf{w}_i|_\alpha}{|\mathbf{w}_i|}>d_\alpha+\frac{\epsilon_0}{p},$$
which contradicts the hypothesis on the uniformity of letter frequencies.

Therefore we must have $ c(\mathbf{w}_i)=\emptyset$ for all $i$, and we can mimic the argument used by J. Justin and G. Pirillo (Theorem 2, \cite{JP}) in order to conclude our proof. For completeness, next we present the details.
 Consider the alphabet $\mathcal{A}_p=\{\mathbf{v}\in\mathcal{A}^*\colon\,|\mathbf{v}|<p\}$ and the morphism $\varphi:\mathcal{A}_p^*\to \mathbb{N}$ given by $\varphi(\mathbf{v})=|\mathbf{v}|$, where the length of $\mathbf{v}$ is taken with respect to the alphabet $\mathcal{A}$. By Theorem 4.2.1 of \cite{Pi}, $\varphi$ is \emph{repetitive}. Recall that, given an alphabet $\tilde{\mathcal{A}}$, a mapping  $\tilde \varphi:\tilde{\mathcal{A}}^*\to S$ to a set $S$ is repetitive if for each $k_0$, there exists an integer $k$, such that each word $\mathbf{v}\in \tilde{\mathcal{A}}^*$ of length $k$ contains a factor of the form $\mathbf{v}_1\ldots \mathbf{v}_{k_0}$, with $\mathbf{v}_i\in \tilde{\mathcal{A}}^*$ and $\tilde\varphi(\mathbf{v}_1)=\cdots=\tilde\varphi(\mathbf{v}_{k_0})$. In our setting, this means that for $k$ sufficiently large, there exist integers $i_1,i_2,\ldots,i_{k_0},i_{k_0+1}$
 such that the  consecutive factors $\mathbf{v}_1 $, $\mathbf{v}_2 $ and $ \mathbf{v}_3$ of  $\mathbf{w}=\mathbf{w}_1\mathbf{w}_2\ldots \mathbf{w}_k$  defined by $$\mathbf{v}_1=\mathbf{w}_{i_1}\mathbf{w}_{i_1+1}\ldots\mathbf{w}_{i_2-1},\,\mathbf{v}_2=\mathbf{w}_{i_2}\mathbf{w}_{i_2+1}\ldots\mathbf{w}_{i_3-1},\, \ldots,\,\mathbf{v}_{k_0}=\mathbf{w}_{i_{k_0}}\mathbf{w}_{i_{k_0}+1}\ldots \mathbf{w}_{i_{k_0+1}-1}$$ have the same length $L=|\mathbf{v}_1|=\ldots =|\mathbf{v}_{k_0}|$.
 On the other hand, since  $ c(\mathbf{w}_i)=\emptyset$, we have $|\mathbf{w}_i|_\alpha=d_\alpha|\mathbf{w}_i|$ for all $\alpha\in\mathcal{A}$. This fact implies that
 $|\mathbf{v}_i|_\alpha=d_\alpha |\mathbf{v}_i|=d_\alpha L$, for all $i$.
 Hence any two of the factors $\mathbf{v}_i$  are permutations of each other, that is  $\mathbf{v}_1\mathbf{v}_2\ldots \mathbf{v}_{k_0}$ is an abelian $k_0$-power, in contradiction with our hypothesis.

\end{proof}
\begin{eg}\label{abc} Consider the substitution $\zeta$ on three letters defined by
$$\zeta(a)=aabc,\,\, \zeta(b)=bbc,\,\, \zeta(c)=acc.$$ This is a primitive substitution whose fixed word
$$\mathbf{u}=aabcaabcbbcaccaabcaabcbbcaccbbcbbc\ldots $$ avoids abelian 3-powers \cite{Dek}. By Proposition  \ref{primitive},
each factor of $\mathbf{u}$ occurs in $\mathbf{u}$ with a uniform frequency.
Hence, there exists a finite coloring of $\mathcal{L}(\mathbf{u})$ which avoids arbitrarily large  monochromatic $k$-powers with uniformly bounded gaps.
\end{eg}
\section{Factorizations of infinite words}\label{LZ}

Let  $\mathbf{u}$ be an infinite word over a finite alphabet $\mathcal{A}$, and  $c$ be any finite coloring of $\mathcal{L}(\mathbf{u})$.
A known  consequence of  Ramsey's Theorem (for infinite complete graphs) is the following.
\begin{thm}\cite{S}
 There exists a factorization $\mathbf{u}=\mathbf{v} \mathbf{u}_1 \mathbf{u}_2 \mathbf{u}_3\ldots $ with $c(\mathbf{u}_1 )=c(\mathbf{u}_2 )=c(\mathbf{u}_3 )=\ldots$.
\end{thm}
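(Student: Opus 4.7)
The plan is to reduce the claim to the infinite version of Ramsey's theorem applied to the complete graph on the positive integers. Specifically, I would define an edge-coloring $\chi$ on the pairs $\{i,j\}$ with $i<j$, $i,j\in\mathbb{N}$, by
\[
\chi(\{i,j\})=c(\mathbf{u}_{[i,\,j-1]}).
\]
This is well-defined: whenever $i<j$, the block $\mathbf{u}_{[i,j-1]}$ is a nonempty finite factor of $\mathbf{u}$, hence belongs to $\mathcal{L}(\mathbf{u})$, the domain of $c$. Because $c$ takes only finitely many values, $\chi$ is a finite coloring of the edge set of the infinite complete graph on $\mathbb{N}$.

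By the infinite Ramsey theorem, there exists a color $\lambda$ and an infinite set $S=\{n_1<n_2<n_3<\ldots\}\subseteq\mathbb{N}$ such that $\chi(\{n_i,n_j\})=\lambda$ for all $i<j$. In particular, restricting to consecutive pairs, one has $c(\mathbf{u}_{[n_k,\,n_{k+1}-1]})=\lambda$ for every $k\geq 1$.

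I would then set
\[
\mathbf{v}=\mathbf{u}_{[1,\,n_1-1]}\quad(\text{the empty word if }n_1=1),\qquad \mathbf{u}_k=\mathbf{u}_{[n_k,\,n_{k+1}-1]}\text{ for }k\geq 1.
\]
By construction these concatenate to the whole infinite word, $\mathbf{u}=\mathbf{v}\mathbf{u}_1\mathbf{u}_2\mathbf{u}_3\ldots$, and each factor $\mathbf{u}_k$ has color $\lambda$, yielding the desired monochromatic factorization.

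There is no real obstacle beyond invoking Ramsey's theorem, which I would take as a black box; the remaining argument is purely a matter of identifying the consecutive-pair structure of an infinite monochromatic clique with a factorization of $\mathbf{u}$. Note that none of the structural hypotheses on $\mathbf{u}$ introduced earlier (primitivity of an underlying substitution, strong recognizability, aperiodicity, uniform frequencies, etc.) play any role here: the statement holds for an \emph{arbitrary} infinite word over a finite alphabet and an arbitrary finite coloring of its factors, which is why the result serves as the motivating background for the more refined questions addressed later in the paper.
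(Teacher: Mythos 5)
Your proof is correct and is exactly the standard argument the paper has in mind: the paper gives no proof of this statement, merely noting that it is ``a known consequence of Ramsey's Theorem (for infinite complete graphs)'' and citing Sch\"utzenberger, and your reduction (coloring the pair $\{i,j\}$ by $c(\mathbf{u}_{[i,j-1]})$ and taking consecutive elements of an infinite monochromatic clique) is precisely that consequence made explicit. Your closing remark is also accurate: none of the structural hypotheses on $\mathbf{u}$ are needed here, which is why the theorem serves only as motivation for the finer questions treated later.
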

Inspired by this result, T. Brown \cite{Br3} and L. Zamboni \cite{Za} posed independently the following question: \emph{Given an infinite word over the alphabet $\mathcal{A}$, does there exist a finite coloring of $\mathcal{L}(\mathbf{u})$ which avoids monochromatic factorizations of $\mathbf{u}$?}  Let $\mathcal{P}$ denote the set of all infinite words over a finite alphabet for which this question has a positive answer. That is, for each $\mathbf{u}\in\mathcal{P}$ there exists a finite coloring $c$ of $\mathcal{L}(\mathbf{u})$ satisfying the property
that for each factorization $\mathbf{u}=\mathbf{u}_1 \mathbf{u}_2 \mathbf{u}_3\ldots $ there exist $i, j$ for which $c(\mathbf{u}_i)\neq c(\mathbf{u}_j)$.
 In \cite{LPZ} the authors proved that all non-uniformly recurrent words and various classes of non-periodic uniformly recurrent words are in $\mathcal{P}$: \emph{non-periodic balanced} words, and all words $\mathbf{u}\in\mathcal{A}^\mathbb{N}$ satisfying $\lambda_{\mathbf{u}}(n+1)-\lambda_{\mathbf{u}}(n)=1$ for all sufficiently large $n$, where $\lambda_{\mathbf{u}}(n)$ denotes the number of distinct factors of $\mathbf{u}$ of length $n$. Recall that an infinite word $\mathbf{u}$ is balanced if for any two finite factors $\mathbf{w}_1$ and $\mathbf{w}_2$ of $\mathbf{u}$ with the same length, we have
 $||\mathbf{w}_1|_\alpha-|\mathbf{w}_2|_\alpha|\leq 1$ for any letter $\alpha\in \mathcal{A}$.
  Next we prove that all substitutive words associated to admissible substitutions on a finite alphabet avoiding arbitrarily large $k$-powers  also belong to $\mathcal{P}$.
\begin{thm}\label{abap}
  Let $\zeta$ be an admissible substitution on a finite alphabet $\mathcal{A}$, with recognizability index $K$. Let $\mathbf{u}$ be a fixed point of  $\zeta$. Then  $\mathbf{u}\in \mathcal{P}$.
\end{thm}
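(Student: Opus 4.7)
The plan is to define a finite coloring $c$ of $\mathcal{L}(\mathbf{u})$ by recursion along the substitution hierarchy, then derive a contradiction from any putative $c$-monochromatic factorization of $\mathbf{u}$ via the aperiodicity of $\mathbf{u}$ (Proposition~\ref{aperiodic}). Setting $L=\max_{\alpha\in\mathcal{A}}|\zeta(\alpha)|$ and $M=2L+K$, for $\mathbf{w}\in\mathcal{L}(\mathbf{u})$ I would put $c(\mathbf{w})=\mathbf{w}$ if $|\mathbf{w}|\le M$; $c(\mathbf{w})=(\mathbf{u}_I^{\mathrm{pref}},\mathbf{u}_I^{\mathrm{suf}})$, well-defined by Lemma~\ref{presuf}, if $|\mathbf{w}|>M$ and $\mathbf{w}$ is not $1$-fitted; and $c(\mathbf{w})=c(\zeta^{-1}(\mathbf{w}))$ if $|\mathbf{w}|>M$ and $\mathbf{w}$ is $1$-fitted, interpreting $\zeta^{-1}(\mathbf{w})$ via Definition~\ref{strong}. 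The recursion terminates because desubstitution strictly shortens long factors of $\mathbf{u}$, so the codomain of $c$ is a finite set.

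Assume for contradiction that $\mathbf{u}=\mathbf{u}_1\mathbf{u}_2\cdots$ is $c$-monochromatic with color $C$. Unrolling the recursion, for each $i$ there is a unique $d_i\ge 0$ with $\mathbf{u}_i=\zeta^{d_i}(\mathbf{w}_i)$ and $c(\mathbf{w}_i)=C$, where $\mathbf{w}_i$ is either of length at most $M$ or non-$1$-fitted. Since $\mathbf{u}_1$ begins at position $1$ of $\mathbf{u}$, which is a $k$-cutting bar for every $k$, and $\mathbf{u}=\zeta^d(\mathbf{u})$ for every $d$, every iterated desubstitute of $\mathbf{u}_1$ also begins at position $1$ of $\mathbf{u}$. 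In the \emph{pair case} $C=(\mathbf{p},\mathbf{s})$, this forces $\mathbf{p}=\emptyset$ and hence $\mathbf{s}\ne\emptyset$; the junction condition at each $p_i=|\mathbf{u}_1\cdots\mathbf{u}_i|$, namely the clash between ``$\mathbf{w}_i$ ends strictly inside a $\zeta$-image'' and ``$\mathbf{w}_{i+1}$ begins at a cutting bar,'' forces $(d_i)$ to be non-increasing and so eventually constant, after which the two junction conditions directly contradict each other. In the \emph{short case} $C=\mathbf{w}$ with $|\mathbf{w}|\le M$, one forces $\mathbf{w}$ to be a prefix of $\mathbf{u}$; if $(d_i)$ is eventually constant equal to $d$ then a suffix of $\mathbf{u}$ equals $(\zeta^d(\mathbf{w}))^\infty$, which contradicts Proposition~\ref{aperiodic} via the minimality of $(X_\zeta,\sigma)$.

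The hardest step is the short case when the $d_i$ are bounded but not eventually constant: each $\mathbf{u}_i$ then lies in the finite set $\{\zeta^d(\mathbf{w}):d\le D\}$ for some $D$, and one must argue, using that every $d$-cutting bar is also a $1$-cutting bar (since $E_d\subseteq E_1$) and the arithmetic $p_i=|\mathbf{w}|\sum_{j\le i}L^{d_j}\in L\mathbb{Z}$, that repeated desubstitution reduces to the eventually-constant subcase and so to periodicity. Throughout, the junction bookkeeping rests on Lemma~\ref{presuf} and Proposition~\ref{pcuttingbars} (strong recognizability of $\zeta^k$), and on the combinatorial fact that a proper prefix of a $\zeta$-image cannot coincide with any full concatenation of $\zeta$-images---the same fact underpinning the constant-length analysis in Theorem~\ref{unifo}.
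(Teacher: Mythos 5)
Your pair case is sound (the clash between an empty prefix and a nonempty suffix at each junction does force the desubstitution depths to decrease, which is impossible along an infinite factorization), but the short case contains a fatal gap, and it is not one that more care can fill: your coloring forgets exactly the information the argument needs, namely the number of desubstitutions performed. When the common color is a short word $\mathbf{v}$, all you know is that each block is $\zeta^{d_i}(\mathbf{v})$ for some $d_i\geq 0$ with no constraint relating consecutive $d_i$'s, because the root $\mathbf{v}$ is too short for Lemma~\ref{presuf} to pin its occurrences to cutting bars. You only sketch the ``bounded but not eventually constant'' subcase (and the formula $p_i=|\mathbf{v}|\sum_j L^{d_j}$ assumes constant length, which the theorem does not), and the unbounded subcase is not addressed at all. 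In fact no argument can close this gap for your coloring: take the Fibonacci substitution $\zeta(0)=01$, $\zeta(1)=0$, which the paper's final remark notes is admissible. Since $\zeta(1)=0$, one has $\zeta^{n}(1)=\zeta^{n-1}(0)$, so $\mathbf{u}=\zeta^{n}(u_1)\zeta^{n}(u_2)\zeta^{n}(u_3)\cdots$ is a factorization into blocks $\zeta^{n}(0)$ and $\zeta^{n-1}(0)$. Every $\zeta^{j}(0)$ with $|\zeta^{j}(0)|>2L+K$ is a $1$-fitted prefix with $\zeta^{-1}(\zeta^{j}(0))=\zeta^{j-1}(0)$, so your recursion sends both kinds of blocks to the same short root $\zeta^{j_0}(0)$, where $j_0$ is the largest $j$ with $|\zeta^{j}(0)|\leq 2L+K$. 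For $n$ large this factorization is therefore monochromatic for your coloring, so the coloring itself, not just the proof, fails.

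Compare this with the paper's construction, which builds the missing information into the color: non-prefixes of $\mathbf{u}$ get color $0$ (so all blocks of a monochromatic factorization are prefixes), short prefixes are colored by themselves, a prefix of the form $\zeta^{k}(\mathbf{v})$ with $|\mathbf{v}|\leq 2L+K<|\zeta(\mathbf{v})|$ gets the pair $(\mathbf{v},k\bmod P)$, and everything else gets color $1$. The integers $k_0$, $q$ and $P$ are chosen in advance (no $\mathbf{w}^{k_0}$ with $|\mathbf{w}|\leq 2L+K$ occurs in $\mathbf{u}$; $|\zeta^{q}(\alpha)|>k_0(2L+K)$; $|\zeta^{P}(\alpha)|>2L_q+K_q$) so that in a monochromatic factorization the exponents are congruent modulo $P$; hence at the first strict increase of the exponent the next block is at least $\zeta^{P}$ of a prefix, long enough for the strong recognizability of $\zeta^{q}$ (Proposition~\ref{pcuttingbars}) to force the position $|\mathbf{v}^{s}|+1$ to be a $q$-cutting bar, which the bound $s|\mathbf{v}|\leq k_0(2L+K)<|\zeta^{q}(\alpha)|$ makes impossible. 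If you want to salvage your approach, record alongside your recursive color the number of desubstitution steps modulo such a $P$ and separate prefixes from non-prefixes; that essentially reproduces the paper's proof, with your cutting-bar junction argument surviving as a pleasant shortcut for what the paper handles as its ``color $1$'' case.
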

\begin{proof}
Let $L_k=\max\{|\zeta^k(\alpha)|\colon\,\alpha\in \mathcal{A}\}$. For $k=1$ we also denote $L=L_1$.
In view of Proposition \ref{aperiodic}, we can fix $k_0$ such that  no word of the form $\mathbf{w}^{k_0}$ with $|\mathbf{w}|\leq 2L+K$ occurs in $\mathbf{u}$.
 Fix an integer $q$ satisfying $|\zeta^q(\alpha)|>k_0(K+2L)$ for all $\alpha\in\mathcal{A}$, and fix another integer $P>q$ satisfying $|\zeta^P(\alpha)|>2L_q+K_q$ for all $\alpha\in\mathcal{A}$, where $K_q$ is the recognizability index of $\zeta^q$ (cf. Proposition \ref{pcuttingbars}).

Define a finite coloring $c$ of $\mathcal{L}(\mathbf{u})$ as follows. Observe first that if $\mathbf{v}$ is a prefix of $\mathbf{u}$, then $\mathbf{v}$ is also a prefix of  $\zeta^k(\mathbf{v})$, since   $\mathbf{u}$ is a fixed point of $\zeta$.  For  $\mathbf{w}\in\mathcal{L}(\mathbf{u})$:
\begin{enumerate}
\item if $\mathbf{w}$ is not a prefix of $\mathbf{u}$, then $c(\mathbf{w})=0$.
\item if $\mathbf{w}$ is a prefix of $\mathbf{u}$, we have three cases.
\begin{enumerate}
  \item if $|\mathbf{w}|\leq 2L+ K$, then $c(\mathbf{w})=\mathbf{w}$;
  \item if $\mathbf{w}=\zeta^k(\mathbf{v})$ for some $k>0$ and some prefix $\mathbf{v}$ of $\mathbf{u}$ satisfying $|\mathbf{v}| \leq 2L+ K < |\zeta(\mathbf{v})|$,  then $c(\mathbf{w})=(\mathbf v,k\mod P)$;
   \item otherwise $c(\mathbf{w})=1$.
                          \end{enumerate}
\end{enumerate}

We claim that $\mathbf{u}$ avoids monochromatic factorizations with respect to $c$. Suppose, on the contrary, that  $\mathbf{u}=\mathbf{u}_1 \mathbf{u}_2 \mathbf{u}_3\ldots$ is a monochromatic factorization. Since $\mathbf{u}_1$ is a prefix of $\mathbf{u}$, we must have $c(\mathbf{u}_1)\neq 0$, and consequently $c(\mathbf{u}_i)\neq 0$ for all $i$. On the other hand, if $|\mathbf{u}_i|<2L+K$ for some $i$, then, by definition of $c$, we would have $\mathbf{w}=\mathbf{u}_i$ for all $i$ and consequently $\mathbf{u}=\mathbf{w}^\infty$, in contradiction with Proposition \ref{aperiodic}.

So we can suppose that each $\mathbf{u}_i$ is a prefix of $\mathbf{u}$ and $|\mathbf{u}_i|> 2L+K$. Write
\begin{equation*}\label{us}
\mathbf{u}_1=\mathbf{u}_{[1,|\mathbf{u}_1|]},\,\,\mathbf{u}_2=\mathbf{u}_{[|\mathbf{u}_1|+1,|\mathbf{u}_1|+|\mathbf{u}_2|]}=\mathbf{u}_{[1,|\mathbf{u}_2|]},\,
\mathbf{u}_3=\mathbf{u}_{[|\mathbf{u}_1|+|\mathbf{u}_2|+1,|\mathbf{u}_1|+|\mathbf{u}_2|+|\mathbf{u}_3|]}=\mathbf{u}_{[1,|\mathbf{u}_3|]}, \,\ldots
\end{equation*}
By recognizability (cf. Lemma \ref{presuf}), this means that $\mathbf{u}_i^\mathrm{pref}=\emptyset$ for each $i$, which implies that  each factor $\mathbf{u}_i$ is $1$-fitted.

Assume that $c(\mathbf{u}_i)=1$ for all $i$.   Take $r_i>  0$ such that the factor $\zeta^{-r_i}(\mathbf{u}_i)$ is not $1$-fitted. Clearly  $|\zeta^{-r_i}(\mathbf{u}_i)|> 2L+K$ (otherwise $c(\mathbf{u}_i)\neq 1$) and $c(\zeta^{-r_i}(\mathbf{u}_i))=1$. Take $r=\min_i\{r_i\}$. The factorization $\mathbf{u}=\zeta^{-r}(\mathbf{u}_1) \zeta^{-r}(\mathbf{u}_2) \zeta^{-r}(\mathbf{u}_3)\ldots$ is also monochromatic (all factors with color $1$), and some factor  $\zeta^{-r}(\mathbf{u}_i)$ is not $1$-fitted, which, as we have seen, is impossible.

 So we must have $c(\mathbf{u}_i)=(\mathbf{w},k)$ for some prefix $\mathbf{w}$, with $|\mathbf{w}|\leq 2L+K<|\zeta(\mathbf{w})|$, and $0\leq k<P$. In this case the factorization is of the form
\begin{equation}\label{prod}
\mathbf{u}=\prod_{i\geq 1}(\zeta^{k+n_iP}(\mathbf{w}))^{s_i},\end{equation}
with  $n_i\geq 0$.

We claim that the set $\{i\colon\, \,n_{i+1}>n_i\}$ is nonempty. Towards a contradiction, suppose that $n_{i+1}\leq n_i$ for all $i$. In this case, there exists $i_0$ such that $n_i=n_{i_0}$ for all $i\geq i_0$, and, in view of \eqref{prod}, we can write $\mathbf{u}=\mathbf{v}(\zeta^{k+n_{i_0}P}(\mathbf{w}))^{\infty}$, for some prefix $\mathbf{v}$ of $\mathbf{u}$, which contradicts Proposition \ref{aperiodic}.

Let $$i_0=\min\{i:\, n_{i+1}>n_i\}.$$ For simplicity of exposition we assume that $i_0=1$, but the argument below holds for any other possible value of $i_0$.
Applying $\zeta^{-(k+n_{1}P)}$ to \eqref{prod}, we obtain
$$\mathbf{u}=\mathbf{w}^{s_1}(\zeta^{(n_{2}-n_{1})P}(\mathbf{w}))^{s_{2}}\ldots.$$

Now, since $\zeta^{(n_2-n_1)P}(\mathbf{w})$ is a prefix of the fixed point $\mathbf{u}$ and $P\leq (n_2-n_1)P$, we have $$\zeta^{(n_2-n_1)P}(\mathbf{w})=\mathbf{u}_{[1,|\zeta^{(n_2-n_1)P}(\mathbf{w})|]}\,\,\,\mbox{and}\,\,\,|\zeta^{(n_2-n_1)P}(\mathbf{w})|\geq|\zeta^{P}(\mathbf{w})|>2L_q+K_q.$$ Hence, by recognizability (see Lemma \ref{pcuttingbars}), we see that $|\mathbf{w}^{s_1}|+1$ is a $q$-cutting bar. But
$$1\leq |\mathbf{w}^{s_1}|=s_1|\mathbf{w}|\leq k_0(K+2L)<|\zeta^q(\alpha)|,$$
 for all $\alpha\in\mathcal{A}$, which means that $|\mathbf{w}^{s_1}|+1\notin E_q$. This is a contradiction, and consequently  we conclude that $\mathbf{u}$ avoids monochromatic factorizations with respect to $c$.
\end{proof}

\begin{eg}
  Consider the primitive substitution $\zeta$ on the alphabet $\{0,1\}$ defined by $\zeta(0)=0100$ and $\zeta(1)=1101,$ as in Example \ref{1111}. Let $\mathbf{u}$ be a fixed point of $\zeta$.  This is a uniformly recurrent, non-periodic and non-balanced infinite word. As a matter of fact:  uniform recurrence and non-periodicity is a consequence of $\zeta$ being admissible; one can easily check that
  $|\zeta^n(1)|_1-|\zeta^n(0)|_1=2^n,$ whereas $|\zeta^n(1)|=|\zeta^n(0)|=4^n$, which means that $\mathbf{u}$ is  non-balanced. On the other hand, we have  
             $$\lambda_{\mathbf{u}}(2)-\lambda_{\mathbf{u}}(1)=2,\,\lambda_{\mathbf{u}}(3)-\lambda_{\mathbf{u}}(2)=4,\, \lambda_{\mathbf{u}}(4)-\lambda_{\mathbf{u}}(3)=5,\ldots.$$
            But, by Theorem 3 of \cite{Mo} (which holds only for substitutions of constant length), we know that the sequence  $\lambda_{\mathbf{u}}(n+1)-\lambda_{\mathbf{u}}(n)$ takes values in  a finite set and each of these values occurs for infinitely many $n$. Hence  the infinite word $\mathbf{u}$ certainly does not satisfies    $\lambda_{\mathbf{u}}(n+1)-\lambda_{\mathbf{u}}(n)=1$ for all sufficiently large $n$.
              We conclude, by Theorem \ref{abap}, that $\mathbf{u}$ provides an example of a non-periodic infinite word in $\mathcal{P}$ which is not considered in the paper of de Luca, Pribavkina, and Zamboni \cite{LPZ}.\end{eg}

\begin{eg}  Consider the substitution $\zeta$ on the alphabet $\{a,b,c\}$ defined by $\zeta(a)=aabc$, $\zeta(b)=bbc$, and $\zeta(c)=acc$, as in the Example \ref{abc}. This is an admissible substitution, and consequently $\mathbf{u}$ is non-periodic. Since $aabc$, $bbc$ and $acc$ only occur in $1$-fitted intervals, $\zeta$ is strongly recognizable.  Hence $\mathbf{u}\in\mathcal{P}$. It is known that $\mathbf{u}$  avoids  abelian $3$-powers \cite{Dek}.

\end{eg}

\begin{rem}
In \cite{LZ}, the authors considered the class $\mathcal{P}_1$ of all infinite words over finite alphabets admitting a prefixal factorization. They  associated to each $\mathbf{u}\in\mathcal{P}_1$ a ``derived" infinite word $\delta(\mathbf{u})$, which may or may not belong to $\mathcal{P}_1$, and defined the class $\mathcal{P}_\infty$ of all words $\mathbf{u}$ in $\mathcal{P}_1$ such that $\delta^n(\mathbf{u})\in\mathcal{P}_1$ for all $n\geq 1$.
 de Luca and Zamboni \cite{LZ} showed that any word $\mathbf{u}$ which does not belong to $\mathcal{P}_\infty$ admits a finite coloring of its factors which avoids monochromatic factorizations of $\mathbf{u}$, that is $\mathbf{u}\in \mathcal{P}$.

 Recall that the Fibonacci word and the Tribonacci word, respectively, are fixed points of the admissible substitutions $\zeta_f(0)=01,\zeta_f(1)=0$ and $\zeta_t(1)=12,\zeta_t(2)=13,\zeta_t(3)=1$, respectively. Consequently, taking Theorem \ref{abap} into account, we see that $\mathbf{u}\in \mathcal{P}$. On the other hand,
the Fibonacci word and the Tribonacci word belong to  $\mathcal{P}_\infty$ \cite{LZ}. Hence, Theorem \ref{abap} does not follow from de Luca and Zamboni results.
\end{rem}

\end{document}